\def\cU{\mathcal{U}}
\def\cP{\mathcal{P}}
\def\cE{\mathcal{E}}
\def\cC{\mathcal{C}}
\newtheorem{theorem}{Theorem}
\newtheorem{corollary}{Corollary}
\begin{document}

\title{Classifying with Uncertain Data Envelopment Analysis}

\author{Casey Garner$^{a\dagger}$ and Allen Holder$^{\, b^*\dagger}$ \\ \\
\parbox{.9\textwidth}{\footnotesize
$^a\,$Department of Mathematics, University of Minnesota,
	Minneapolis, MN, USA, (garne214@umn.edu)} \\[8pt]
\parbox{.9\textwidth}{\footnotesize
$^b\,$Department of Mathematics, Rose-Hulman Institute of Technology,
        Terre Haute, IN, USA (holder@rose-hulman.edu)} \\[4pt]
\parbox{0.9\textwidth}{\footnotesize
$^*$Corresponding author} \\[0pt]
\parbox{.9\textwidth}{\footnotesize
$^\dagger\,$ authors listed alphabetically and contributed equally,
	research conducted at Rose-Hulman Institute of Technology} \\[8pt]
}

\maketitle

\begin{abstract}
Classifications organize entities into categories that identify
similarities within a category and discern dissimilarities among 
categories, and they powerfully classify information in support 
of analysis. We propose a new classification scheme premised 
on the reality of imperfect data. Our computational model 
uses uncertain data envelopment analysis to define a 
classification's proximity to equitable efficiency, which 
is an aggregate measure of intra-similarity within a classification's 
categories. Our classification process has two overriding 
computational challenges, those being a loss of convexity and a
combinatorially explosive search space.  We overcome the first by
establishing lower and upper bounds on the proximity value, and
then by searching this range with a first-order algorithm.
We overcome the second by adapting the p-median problem to initiate
our exploration, and by then employing an iterative
neighborhood search to finalize a classification. We
conclude by classifying the thirty stocks in the Dow 
Jones Industrial average into performant tiers and by classifying 
prostate treatments into clinically effectual categories. \\\

\noindent{\bf Keywords:} Data Envelopment Analysis, 
    Robust Optimization, \\[1pt]
\hspace*{54pt}Classification, Clustering \\[4pt]
\end{abstract}

\section{Motivation and Introduction} \label{sec-intro}

Classifications are ubiquitous and essential constructs across the human
experience, a reality motivated by the fact that  we learn through association 
and disassociation.  Indeed, our thoughts are replete with comparisons and
contrasts among known and novel aspects of our understanding, and we routinely
toil to organize and group information into like collections for myriad
purposes.  This overriding narrative to categorize information manifests itself
in the joys of life, say by classifying literature, art, food, and sport; it
promotes scientific progress by delineating the fields of study and
by interpreting the results of experimentation; and it aids innovation in
fields like engineering, healthcare, and politics as we study how the practical
benefits of basic research can promote the general welfare.

The classification concept has not escaped mathematical and practical 
study, and the process of dividing entities into categories has a long 
and fruitful history. The literature on how to classify is consequently 
substantial, well studied, broad in application, and methodologically 
diverse as illustrated by the following compendious list. \\
\begin{center}
\renewcommand{\tabcolsep}{12pt}
\renewcommand{\arraystretch}{1.2}
\begin{tabular}{ll}
\hline \\[-8pt]
decision trees~\cite{vens2008} &
	integer programming~\cite{bertsimas2007} \\
neural networks~\cite{zhang2000} &
	robust statistics~\cite{garcia2010} \\
Bayesian networks~\cite{phyu2009} &
	discriminant analysis~\cite{huberty2006} \\
graph clustering~\cite{schaeffer2007} &
data mining~\cite{olafsson2008} \\[4pt]
\hline
\end{tabular} \\[\baselineskip]
\end{center} 
Additional classification techniques, along with numerous citations, 
are reviewed in~\cite{duran2013,jain1999,kotsiantis2007,saxena2017}.
This disciplinary range challenges having a categorical language that
would promote shared efforts across disciplines, and while all of mathematics,
computer science, statistics, probability, data science, operations
research, and machine learning dabble with similar classification problems,
they each scope their problems in terms apropos for their specific 
tasks, see~\cite{jain1999,lee2010} for related comments.  We use the
term classification diffusely to simply mean that entities are
partitioned into categories, and we refrain from related terms like 
clustering and grouping that have more refined definitions in some 
settings.

We propose a classification technique motivated by uncertain data envelopment 
analysis (uDEA) that scores a classification by how intra-equitable its 
categories are. Our fundamental assumptions are that:
\begin{enumerate}
\item classifications of the same entities are to be compared on a \\ 
    common set of characteristics, and
\item the characteristics are imperfect.
\end{enumerate}
The assumption of imperfect characteristics is surely an 
anticipated general rule and not a rare oddity; after all, a lack of
preciseness taints everything from scientific experimentation, to expert 
opinion, and to rule based delineation. We do not make any 
stochastic assumption, and hence, we bypass all hassle of verifying 
distributional suppositions.  That said, we can interpret our model 
stochastically if such an interpretation appropriately aids analysis.  We 
instead use the modern concept of uncertainty associated with robust 
optimization to account for imperfect information, and our model builds 
from the work in~\cite{ehrgott2018} to leverage uncertainty toward the 
goal of establishing equity within a classification's categories. We comment 
that data envelopment analysis (DEA) is not new to the processes of clustering
and classification, see~\cite{cinaroglu2020,po2009,thanassoulis1996} as
illustrative examples.

Sections~\ref{sec-eqEff} and~\ref{sec-algorithm} define the proximity 
to equitable efficiency and give an algorithm to approximate its value.
We present our classification scheme in Section~\ref{sec-classification}, and
we review examples in finance and medicine in Section~\ref{sec-examples}.
Note that all norms are $2$-norms unless otherwise noted, although
we explicitly denote all norms in the proof of Theorem~\ref{thm-capability} 
to avoid confusion.

\section{Proximity to Equitable Efficiency} \label{sec-eqEff}

We define an assessment metric that models the quality of a classification. 
Assume $\mathcal{O} = \left \{ o^t : t=1,2,... T \right \}$ is a finite 
set of objects that are to be partitioned into the nonempty disjoint 
categories $C^s$, for $s=1,2,..., S$. So each $o^t$ is in some $C^s$, 
and $\cC = \{C^s: s=1,2,...,S \}$ partitions $\mathcal{O}$. 
We assess a classification $\cC$ by defining a measure of the
intra-similarity of the elements within the individual categories 
$C^s$, and the classification's overall quality then equates to the
sum of its categorical values. 

Characteristics describe individual objects, and we assume that a
classification's quality coincides with some of these 
characteristics being small and some of them being large. Our 
language here agrees with the DEA literature, 
see~\cite{cooper2007,emrouznejad2008,hwang2016,liu2013} as
reviews, and characteristics whose decrease signifies improvement are called
input-characteristics, or more simply inputs, and those whose 
increase signifies improvement are called output-characteristics, 
or more simply outputs.  We let 
\[
\renewcommand{\arraycolsep}{1pt}
\begin{array}{rl}
\displaystyle
X^s_{it} & \mbox{be the value of the $i$-th input-characteristic of 
	object $t$ in category $C^s$, and} \\[4pt]
\displaystyle
Y^s_{it} & \mbox{be the value of the $i$-th output-characteristic of 
	object $t$ in category $C^s$.}
\end{array}
\]
We assume there are $N$ inputs and $M$ outputs, making $X^s$ an 
$N \times |C^s|$ matrix and $Y^s$ an $M \times |C^s|$ matrix.

We employ the concept of efficiency from DEA, and
the objects within a category are our decision making 
units (DMUs). An object's efficiency within its category is measured 
as a ratio of weighted sums, with the numerator being an aggregate of the 
output-characteristics and the  denominator being an aggregate of the 
input-characteristics.  The primary  goal of DEA is to decide nonnegative 
weighting parameters to maximize an object's efficiency while maintaining 
that each object's efficiency is no greater than one with the same weights.
There are several standard variants of this central theme, and we select 
the canonical input oriented model with variable returns to scale~\cite{cooper2007}. 
We specifically chose to calculate efficiency with the dual of the LP that 
maximizes the efficiency score for object $t$, which is
\begin{equation} \label{eq-effScore}
E^{t} = \mbox{min} \left\{ c^T \eta : 
	A^{t} \eta \leq 0, \, B \eta = e, \, \eta \geq 0 \right\}, 
\end{equation}
provided that
\[ 
A^{t} = \left[ 
	\renewcommand{\arraystretch}{1.3}
	\renewcommand{\arraycolsep}{4pt}
	\begin{array}{c|c|c}
		-Y^s & y^{t} & 0 \\ \hline
		X^s & 0 & -x^{t} 
	\end{array}
	\right], \;
B = \left[ 
	\renewcommand{\arraystretch}{1.3}
	\renewcommand{\arraycolsep}{4pt}
	\begin{array}{c|c|c}
		e^T & 0 & 0 \\ \hline
		0 & 1 & 0 
	\end{array}
	\right], \;\mbox{ and }\;
c = \left(
	\begin{array}{c}
		0 \\ 0 \\ \vdots \\ 0 \\ 1
	\end{array}
	\right),
\]  
with $o^t \in C^s$ and $x^t$ and $y^t$ being the respective 
$t$-columns of $X^s$ and $Y^s$. The all  ones column vector 
is $e$, with its length being decided by the context of its
use. Note that
\begin{equation} \label{eq-etaHat}
\begin{array}{l}
\displaystyle
\hat{\eta\,}^T = \left(0, 0, \ldots, 0, 1, 0, \ldots, 0, 1, 1 \right) \\[4pt]
\hspace*{31.3pt} \mbox{position $t$ } \raisebox{4pt}{$\uparrow$}
\end{array}
\end{equation}
is always feasible, and hence, $E^t \le 1$. The value of $E^t$ 
is the efficiency score of object $t$ relative to its category, and if 
this score is less than one, then object $t$ is inefficient in its 
category. Inefficiency means that another object's efficiency score within
the category bests the efficiency score of object $t$ no matter how the 
inputs and outputs are weighted.

Our assumption of uncertain information indicates that $E^t$ is likely
imprecise and that an inefficiency could be the result of inaccurate data.
The uDEA problem in~\cite{ehrgott2018} addresses 
this concern by acknowledging and modeling uncertainty and by recasting the 
efficiency score as a robust linear program. The solution of the robust 
linear program maximizes an object's efficiency score by selecting a best 
collection of data from among the uncertain possibilities. Thus an 
inefficient object based on the original data might become efficient with 
a prudent and reasonable selection of new data from a set of
uncertain options. We note that several modes of
robust DEA are now proposed and that applications are increasing,
see~\cite{peykani2020} and its bibliography as a review, as well
as the more recent publications~\cite{salahi2021,TOLOO2022102583}.

We use an ellipsoidal model of uncertainty for each row of $A^t$, and denoting
the $i$-th row of $A^t$ by $A_i^t$, we set
\[ 
\cU_i^t(\sigma_i) = \{ A_i^t + \sigma_i \, u ^T R_i : ||u||_{2} \le 1 \}.
\]
The scalar $\sigma_i$ is assumed to be nonnegative, and the matrix $R_i$ has 
$|C^s|+2$ columns along with a compatible number of rows to align with the 
dimension of the vector $u$.  The matrix $R_i$ defines the structure of 
uncertainty, whereas $\sigma_i$ scales this structure to shrink or enlarge the 
amount of uncertainty.  

The format of $A^t$ imposes a concomitant format on $R_i$. The last two 
columns of $A^t$ duplicate the input- and output-characteristics of object
$t$, and $R_i$ must maintain this format. Assume for illustrative 
purposes that $t = 2 \in C^s$ corresponds with the second column of data
in $X^s$ and $Y^s$. Then, if
\[
X^s = \left[ 
	\renewcommand{\arraycolsep}{2pt}
	\begin{array}{cc} 1.2 \; , & 0.8 \end{array} \right]
\; \mbox{ and } \;
Y^s = \left[ 
 	\renewcommand{\arraycolsep}{4pt}
	\renewcommand{\arraystretch}{1.2}
	\begin{array}{cc} 0.9 & 0.5 \\ 0.6 & 0.7 \end{array} \right],
\]
we have
\[
A^2 = \left[ 
 	\renewcommand{\arraycolsep}{4pt}
	\renewcommand{\arraystretch}{1.2}
	\begin{array}{rr|r|r}
	-0.9 & -0.5 & 0.5 & \multicolumn{1}{c}{0} \\ 
	-0.6 & -0.7 & 0.7 & \multicolumn{1}{c}{0} \\ \hline
	 1.2 &  0.8 & \multicolumn{1}{c|}{0}   & -0.8
	\end{array} \right].
\]
Suppose our model of uncertainty for the inputs assumes
a $2 \times 2$ identity with regard to $X^s$. Such diagonal models are 
commonly motivated by probabilistic assumptions such as the characteristics 
being uncorrelated random variables.  We have in this case that
\begin{equation} \label{eq-identityR}
R_3 = \left[ 
 	\renewcommand{\arraycolsep}{4pt}
	\renewcommand{\arraystretch}{1.2}
	\begin{array}{rr|r|r}
	1 & 0 & 0 & 0 \\
	0 & 1 & 0 & -1 
	\end{array} \right],
\end{equation}
from which 
\[
A^2_3 + \sigma_3 u^T R_3 = 
	\left[ \begin{array}{cccc}
	1.2 + \sigma_3 u_1, & 0.8 + \sigma_3 u_2, & 0, & -0.8 - \sigma_3 u_2
	\end{array} \right].
\]
The last column of $R_3$ must be the negative of the second column for
the structures of $A^2_3$ and $A^2_3 + \sigma_3 u^T R_3$ to agree as $u$ varies
over the unit disc.  Violations to this structural agreement would
essentially, and erroneously, equate to the second input-characteristic of the 
second object having two values as the data ranged over the uncertainty set. 
Similar requirements for the output-characteristics are necessary, with 
illustrative (non-identity) options for $A^2$ being
\[
R_1 = \left[ 
 	\renewcommand{\arraycolsep}{4pt}
	\renewcommand{\arraystretch}{1.2}
	\begin{array}{rr|r|r}
	-0.2 &  0.1 & -0.1 & 0 \\
 	 0.3 & -0.2 &  0.2 & 0
	\end{array} \right]
\; \mbox{ and } \;
R_2 = \left[ 
 	\renewcommand{\arraycolsep}{4pt}
	\renewcommand{\arraystretch}{1.2}
	\begin{array}{rr|r|r}
	0.7 \,, &  0.6 & -0.6 & 0 \\
	0.1 \,, & -0.3 &  0.3 & 0
	\end{array} \right].
\]
We express $R_i$ as $[R'_i \,|\, R''_i]$ in the forthcoming discussion,
with $R''_i$ having two columns and being decided by $R'_i$ as
illustrated.

We let $\sigma$ be the $M+N$ vector whose $i$-th component is $\sigma_i$, and 
we define the robust efficiency score for object $t$ with respect to 
$\sigma$ as,
\begin{eqnarray*}
\cE^t(\sigma) 
    & = & \mbox{min}  \left\{c^T \eta : \hat{A}^t_i \eta \leq 0, \; 
		\forall \, \hat{A}^t_i \in \cU^t_i(\sigma_i), \; 
		\forall \, i, \, B \eta = e, \, \eta \geq 0 \right\} \\[4pt]
    & = & \mbox{min}  \left\{c^T \eta : \hat{A}^t_i \eta 
            + \sigma_i \, \| R_i \eta \| \leq 0, \; \forall \, i, \, 
            B \eta = e, \, \eta \geq 0 \right\}, \\
\end{eqnarray*}
where the second math program is the standard second-order cone
expression of the robust problem. The structure of each $R_i$ 
mandates that $\cE^t(\sigma) \le 1$ for the same reason that 
$E^t \le 1$, i.e. because $\hat{\eta}$ in~\eqref{eq-etaHat} 
remains feasible. 

We require that $\cE^t(\sigma) = 1$ for sufficiently large
$\sigma$, a property that ensures object $t$ is capable of
efficiency, or more succinctly capable, if enough uncertainty is
assumed. A mathematically contrived example
in~\cite{ehrgott2018} without ellipsoidal uncertainty shows 
that $\sup \{ \cE^t(\sigma) : \sigma \ge 0 \}$ can be strictly less 
than one, and hence, capability is not necessarily ensured and requires 
applicable justification. Theorem~\ref{thm-capability} gives serviceable
assurance.
\begin{theorem} \label{thm-capability}
If $R'_i$ has full column rank for each $i$, then $\cE^t(\sigma) = 1$ for 
all sufficiently large $\sigma$.
\end{theorem}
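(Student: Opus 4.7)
The plan is to establish the reverse bound $\mathcal{E}^t(\sigma) \ge 1$ for all sufficiently large $\sigma$; combined with the already-noted upper bound $\mathcal{E}^t(\sigma) \le 1$ that follows from the feasibility of $\hat{\eta}$ in~\eqref{eq-etaHat}, equality then follows. Let $\eta$ be feasible for the robust program and decompose it as $\eta = (\eta',\eta'',\eta''')$, where $\eta' \in \mathbb{R}^{|C^s|}$ corresponds to the objects in $C^s$ and $\eta''$, $\eta'''$ are the last two scalar entries. The equation $B\eta = e$ immediately fixes $\eta'' = 1$ and $e^T\eta' = 1$, placing $\eta'$ in the unit simplex on $|C^s|$ coordinates.

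I would next exploit the structural linkage in $R_i = [R'_i \,|\, R''_i]$ forced by the duplication of the $t$-th column of $A^t$. Writing $e_t$ for the $t$-th standard basis vector, this linkage gives $R_i\eta = R'_i(\eta' - e_t)$ on each output row (using $\eta''=1$) and $R_i\eta = R'_i(\eta' - e_t\eta''')$ on each input row. The parallel identity $y^t_i = Y^s_i e_t$ collapses the affine part of an output constraint in exactly the same way, yielding
\[
\sigma_i\,\|R'_i(\eta'-e_t)\| \;\le\; Y^s_i(\eta'-e_t).
\]

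The heart of the argument is a compactness estimate on $v := \eta'-e_t$, which lies in the closed cone $K = \{v : v+e_t \ge 0,\ e^Tv = 0\}$. Since $R'_i$ has full column rank, $\|R'_i u\|$ attains a positive minimum $\delta_i > 0$ on the compact set $K \cap \{\|u\|=1\}$, while $|Y^s_i u|$ admits a finite maximum $\gamma_i$ there. Normalizing any putative $v \ne 0$ in the displayed inequality produces $\sigma_i \delta_i \le \gamma_i$, a contradiction once $\sigma_i > \gamma_i/\delta_i$; hence $\eta' = e_t$. Substituting $\eta' = e_t$ into any input row produces $x^t_i(1-\eta''') + \sigma_i\|R'_i e_t\|\,|1-\eta'''| \le 0$, and because $R'_i e_t \ne 0$ by the rank hypothesis, a sign analysis on $1-\eta'''$ forces $\eta''' = 1$ once $\sigma_i > x^t_i/\|R'_i e_t\|$. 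Hence $c^T\eta = \eta''' = 1$, establishing the claimed lower bound.

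The chief obstacle I anticipate is the compactness step, specifically converting the abstract full column-rank hypothesis into the uniform positive lower bound $\delta_i$ over the simplex-shifted cone $K$ and then threading the resulting finite thresholds through at least one output row and one input row simultaneously. Everything else is algebraic bookkeeping dictated by the linked columns of $R_i$ and the constraint $B\eta = e$.
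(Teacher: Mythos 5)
Your proposal is correct, but it proves the theorem by a genuinely different route than the paper. You argue directly on the feasible set of the robust second-order cone program: using $B\eta = e$ to fix $\eta''=1$, the structural linkage between $R''_i$ and the $t$-th column of $R'_i$ to collapse each norm term to $\|R'_i(\eta'-e_t)\|$ or $\|R'_i(\eta'-e_t\eta''')\|$, and a compactness bound ($\|R'_i u\| \ge \delta_i > 0$ on the relevant unit-norm slice, available precisely because $R'_i$ has full column rank) to force $\eta'=e_t$ and then $\eta'''=1$ once every $\sigma_i$ exceeds an explicit threshold; since $\hat\eta$ from~\eqref{eq-etaHat} stays feasible, $\cE^t(\sigma)=1$. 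The paper instead never touches the feasible set directly: it invokes the monotonicity of the robust value under inclusion of uncertainty sets (Proposition 1 of~\cite{ehrgott2018}) together with the capability result for box uncertainty (Theorem 2 of~\cite{ehrgott2020}), and reduces the theorem to the set inclusion $\{\hat\sigma_i u^T : \|u\|_\infty \le 1\} \subseteq \{\sigma_i u^T R'_i : \|u\|_2 \le 1\}$, which it verifies with the Moore--Penrose pseudo-inverse of $R'_i$. Your argument buys self-containment (no reliance on the external box-uncertainty theorem), yields explicit thresholds on $\sigma$, and in fact shows the hypothesis can be weakened to $R'_i v \neq 0$ on the relevant directions; the paper's argument buys brevity by leveraging established machinery. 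Two small points to tidy: your set $K=\{v : v+e_t\ge 0,\ e^Tv=0\}$ is a translated simplex, not a cone --- the normalization step still works because any unit vector $u$ with $e^Tu=0$ and $u_j\ge 0$ for $j\ne t$ automatically satisfies $u_t\ge -1$, but you should say so; and the sign analysis on the input row implicitly uses $x^t_i \ge 0$ (or, equivalently, fold $|x^t_i|$ into the threshold on $\sigma_i$). Neither issue is a genuine gap.
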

\begin{proof}
Proposition 1 in~\cite{ehrgott2018} establishes the following 
general monotonicity property.  If two collections of uncertainty 
satisfy $\cU'_i \subseteq \cU''_i$ for each $i$, then
\begin{equation} \label{eq-monotonic}
\left.
\begin{array}{rcl}
\lefteqn{ \mbox{min}  \left\{c^T \eta : \hat{A}^t_i \eta \leq 0, \; 
    \forall \, \hat{A}^t_i \in \cU'_i(\sigma_i), \; 
	\forall \, i, \, B \eta = e, \, \eta \geq 0 \right\} } \\[6pt]
& \le &
\mbox{min}  \left\{c^T \eta : \hat{A}^t_i \eta \leq 0, \; 
    \forall \, \hat{A}^t_i \in \cU''_i(\sigma_i), \; 
	\forall \, i, \, B \eta = e, \, \eta \geq 0 \right\} \\[6pt]
& \le & 1,
\end{array}
\hspace*{8pt}
\right\}
\end{equation}
where the last inequality follows from an assumption that
$\hat{\eta}$ in~\eqref{eq-etaHat} is feasible in both problems.

Theorem 2 in~\cite{ehrgott2020} guarantees the existence of
$\hat{\sigma} > 0$ such that $\cE^t(\sigma) = 1$ if
$\sigma \ge \hat{\sigma}$ and if
\[
\cU_i = \{ A_i : A_i^t + \sigma u^T [I \,|\, R'_i]: \| u \|_{\infty} \le 1 \}.
\]
So the inequalities in~\eqref{eq-monotonic} will complete the proof upon
showing that for each $i$ and for sufficiently large $\sigma$,
\[
\{ A_i : A_i^t + \hat{\sigma}_i u^T [I \,|\, R''_i] : \| u \|_{\infty} \le 1 \}
\subseteq
\{ A_i : A_i^t + \sigma_i u^T R_i : \| u \|_{2} \le 1 \}.
\]
The required format of $R''_i$ in both cases reduces this argument 
to showing that 
\[
\{ \hat{\sigma}_i \, u^T : \| u \|_{\infty} \le 1 \}
\subseteq
\{ \sigma_i \, u^T R'_i : \| u \|_{2} \le 1 \},
\]
for each $i$ and sufficiently large $\sigma$.

Fix $i$ and select 
$v \in \{ \hat{\sigma}_i \, u^T : \| u \|_{\infty} \le 1 \}$.
Then $\| v \|_2 \le \hat{\sigma}_i^{\,|C^s|}$.
Further select $\sigma_i$ so that its components satisfy
\[
\sigma_i \ge \hat{\sigma}_i^{\,|C^s|} \, \left\| \left( R'_i \right)^+ \right\|_2,
\]
where $\left( R'_i \right)^+$ is the Moore-Penrose pseudo-inverse
so that 
\[
u^T = \frac{1}{\sigma_i} \; v^T \left( R'_i \right)^+ \; \mbox{ solves } \;
u^T \left( \sigma_i R'_i \right) = v^T.
\]
We now have
\[
\| u \|_2
\le \frac{1}{\sigma_i} \, \| v \|_2  \, 
        \left\| \left( R'_i \right)^+ \right\|_2 
\le \frac{1}{\sigma_i} \, \hat{\sigma}_i^{\,|C^s|} \, 
        \left\| \left( R'_i \right)^+ \right\|_2 
\le 1,
\]
which establishes the result. 
\end{proof}
An immediate consequence of Theorem~\ref{thm-capability} is that
invertible $R'_i$ matrices ensure capability, and since common
examples set $R'_i$ to be positive diagonal matrices or covariance matrices,
our assumption of capability is routine. Note also that the rank
condition is sufficient but not necessary, and that examples not
satisfying the rank condition can still result in each object being 
capable. Indeed, all but cleverly constructed examples lead to the 
capability of each object in the authors' experience, making the 
assumption of capability prevalent and not inimical to practical 
application.

We classify objects into categories so that the objects within
a category require a minimum amount of uncertainty for each of
the objects to have a claim to efficiency. So our motivating
question is, how much uncertainty does a category need for
all of its objects to have a robust efficiency score of one? 
We define the proximity to equitable efficiency for category 
$s$ to answer this question, which is
\begin{equation} \label{eqn: orig_model}
\cP^s = 
\mbox{min} \left\{ \| \sigma\| : \cE^t(\sigma) = 1, 
	\; \forall \, t \in C^s \right\}. 
\end{equation}
The constraints of this problem necessitate that each object 
reaches its maximum efficiency score by selecting characteristics 
from the uncertainty sets $\cU^t_i(\sigma_i)$, a requirement that 
imparts equity among the objects within the category relative 
to efficiency. Note that our proximity model does not require 
each object to be assessed in the same fashion because each 
object can combine their characteristic values differently to 
reach an efficiency score of one. A straightforward interpretation 
of $\cP^s$ is that it defines a minimum vicinity over which the
characteristic data can vary so that each object is equally and 
maximally efficient if it were allowed to select values from the 
uncertain possibilities. We comment that a different sense of equity 
appears in the multiple objective 
literature~\cite{baatar2006,kostreva1999}.

We say that a nonnegative $\sigma$ is feasible-for-classification 
for category $s$ if $\cE^t(\sigma) = 1$ for all $t \in C^s$, that is, 
if $\sigma$ is feasible for the optimization problem defining $\cP^s$.
Note that we have two properties from~\eqref{eq-monotonic}, those being:
\begin{center}
\begin{tabular}{rl}
Property 1: & if $\sigma'' \ge \sigma'$ and $\cE^t(\sigma') = 1$, then
	$\cE^t(\sigma'') = 1$, and \\[8pt]
Property 2: & if $\sigma'$ is feasible-for-classification and 
	$\sigma'' \ge \sigma'$, \\[2pt]
	& then $\sigma''$ is also feasible-for-classification.
\end{tabular}
\end{center}
Property 1 shows that object $t$ maintains an efficiency score of one if 
$\sigma$ surpasses what is required to satisfy $\cE^t(\sigma) = 1$, 
which suggests that we can construct a feasible-for-classification $\sigma$ 
by computing the componentwise maximums of solutions to
\[
\min \left\{ \| \sigma \| : \cE^t(\sigma) = 1 \right\}.
\]
Theorem~\ref{thm-boundProximity} confirms this suggestion and establishes 
both upper and lower bounds on $\cP^s$ based on this construction.  

Let
\[
{\bf \Sigma} = \left(\sigma^1, \sigma^2, \ldots, \sigma^{|C^s|} \right)
	\in \prod_{t\in C^s} \arg\!\min \left\{ \| \sigma \| 
		: \cE^t(\sigma) = 1 \right\},
\]
and let $\hat{\sigma}({\bf\Sigma})$ be the nonnegative vector
whose components are
\[
\hat{\sigma}_i ({\bf\Sigma}) = \max_t \left\{ \sigma_i^t \right\}.
\]
Theorem~\ref{thm-boundProximity} establishes upper and lower bounds
on $\cP^s$ in terms of $\hat{\sigma}({\bf\Sigma})$.

\begin{theorem} \label{thm-boundProximity}
We have for any
\[
{\bf \Sigma} \in \prod_{t \in C^s} \arg\!\min \left\{ \| \sigma \|
                : \cE^t(\sigma) = 1 \right\}
\]
that the proximity to equitable efficiency for category $s$ satisfies
\[
\frac{1}{\sqrt{M+N}} \, \| \hat{\sigma} ({\bf\Sigma}) \| \le \cP^s
	\le \| \hat{\sigma} ({\bf\Sigma}) \|.
\]
\end{theorem}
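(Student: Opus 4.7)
The plan is to handle the two inequalities separately: the upper bound is essentially immediate from the monotonicity Property~1, while the lower bound reduces to a short dimension-counting argument built on the nonnegativity of $\sigma$.

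For the upper bound I would argue that $\hat{\sigma}(\mathbf{\Sigma})$ is itself feasible-for-classification. Indeed, by definition $\hat{\sigma}(\mathbf{\Sigma}) \ge \sigma^t$ componentwise for every $t \in C^s$, and each $\sigma^t$ satisfies $\cE^t(\sigma^t) = 1$ since it is a minimizer of $\min\{\|\sigma\| : \cE^t(\sigma) = 1\}$. Property~1 then upgrades each of these to $\cE^t(\hat{\sigma}(\mathbf{\Sigma})) = 1$ for every $t \in C^s$, which means $\hat{\sigma}(\mathbf{\Sigma})$ is feasible in the optimization problem defining $\cP^s$. Hence $\cP^s \le \|\hat{\sigma}(\mathbf{\Sigma})\|$.

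For the lower bound I would start by selecting an optimal $\sigma^*$ for the problem defining $\cP^s$, so $\cP^s = \|\sigma^*\|$ and $\cE^t(\sigma^*) = 1$ for every $t \in C^s$. Consequently $\sigma^*$ is feasible in each individual problem $\min\{\|\sigma\| : \cE^t(\sigma) = 1\}$, which forces $\|\sigma^t\| \le \|\sigma^*\| = \cP^s$ for each $t$. Nonnegativity of $\sigma^t$ then gives the componentwise estimate $\sigma_i^t \le \|\sigma^t\| \le \cP^s$ for every index $i$, so taking the maximum over $t$ yields $\hat{\sigma}_i(\mathbf{\Sigma}) \le \cP^s$. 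Summing this bound over the $M+N$ components provides
\[
\|\hat{\sigma}(\mathbf{\Sigma})\|^2 = \sum_{i=1}^{M+N} \hat{\sigma}_i(\mathbf{\Sigma})^2 \le (M+N)(\cP^s)^2,
\]
which rearranges to the claimed lower bound after dividing through and taking square roots.

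The argument has no genuinely hard step, but the subtlety worth highlighting is the direction in which nonnegativity is used: without $\sigma \ge 0$, one could not pass from a bound on $\|\sigma^t\|$ to a bound on each component $\sigma_i^t$, and the dimension factor $\sqrt{M+N}$ would not appear naturally. The main care is thus in recognizing that the optimal $\sigma^*$ defining $\cP^s$ simultaneously certifies feasibility in all $|C^s|$ of the per-object problems, which is what couples the aggregate norm $\cP^s$ to the componentwise maxima that assemble $\hat{\sigma}(\mathbf{\Sigma})$.
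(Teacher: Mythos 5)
Your proof is correct and follows essentially the same route as the paper: the upper bound via Property~1 showing $\hat{\sigma}({\bf\Sigma})$ is feasible-for-classification, and the lower bound via the componentwise estimate $\sigma_i^t \le \|\sigma^t\| \le \cP^s$ followed by summing over the $M+N$ components. Your use of an optimal $\sigma^*$ for the problem defining $\cP^s$ is just a rephrasing of the paper's direct comparison $\min\{\|\sigma\| : \cE^t(\sigma)=1\} \le \cP^s$, so there is no substantive difference.
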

\begin{proof}
Select
\[
{\bf \Sigma} = \left(\sigma^1, \sigma^2, \ldots, \sigma^{|C^s|} \right) 
        \in \prod_{t \in C^s} \arg\!\min \left\{ \| \sigma \|
                : \cE^t(\sigma) = 1 \right\}.
\]
Then $\hat{\sigma} ({\bf\Sigma}) \ge \sigma^t$ for all $t$ by definition,
and we have from Property 1 above that $\cE^t(\hat{\sigma}) = 1$ 
for all $t$. So $\hat{\sigma} ({\bf\Sigma})$ is feasible-for-classification
for category $s$
and 
\[
\cP^s \le \| \hat{\sigma} (\bf\Sigma) \|.
\]

The lower bound is established by first noticing that 
\[
\sigma_i^t \le \| \sigma^t \| = 
\min \left\{ \| \sigma \| : \cE^t(\sigma) = 1 \right\} \le \cP^s
\; \mbox{ for all $i$ and $t$.}
\]
So,
\[
\hat{\sigma}_i ({\bf\Sigma}) = \max_t \; \sigma_i^t \le \cP^s
	\,\; \mbox{ for all $i$}.
\]
We conclude that $\hat{\sigma} ({\bf\Sigma}) \le \cP^s \, e$,
and hence,
\[
\| \hat{\sigma} ({\bf\Sigma}) \| \le \sqrt{M+N} \; \cP^s,
\]
which completes the proof. 
\end{proof}
Two corollaries follow, the first of which is an immediate consequence of
Theorem~\ref{thm-boundProximity}.
\begin{corollary} \label{cor-supinfbound}
The proximity to equitable efficiency for category $s$ satisfies,
\[
\frac{1}{\sqrt{M+N}} \, \left( 
	\sup_{\bf\Sigma} \, \left\{ \| \hat{\sigma} ({\bf\Sigma}) \| 
		\right\} \right)
	\le \cP^s \le \inf_{\bf\Sigma} \, 
		\left\{ \| \hat{\sigma} ({\bf\Sigma}) \| \right\},
\]
where the supremum and infemum are expressed over the collection
\[
{\bf\Sigma} \in \prod_{t \in C^s} \arg\!\min \left\{ \| \sigma \|
                : \cE^t(\sigma) = 1 \right\}.
\]
\end{corollary}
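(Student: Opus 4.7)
The plan is to observe that Theorem~\ref{thm-boundProximity} delivers the two-sided bound
\[
\frac{1}{\sqrt{M+N}} \, \| \hat{\sigma} ({\bf\Sigma}) \| \le \cP^s \le \| \hat{\sigma} ({\bf\Sigma}) \|
\]
uniformly over \emph{every} admissible selection ${\bf\Sigma} \in \prod_{t \in C^s} \arg\!\min \{ \|\sigma\| : \cE^t(\sigma) = 1 \}$. Because $\cP^s$ is intrinsic to the category and does not depend on the particular choice of ${\bf\Sigma}$, I can pass to a sup (respectively inf) over ${\bf\Sigma}$ on the left (respectively right) outer expression without disturbing the middle term, and this is all the corollary is asking.

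Concretely, I would split the argument in two. For the upper bound, the right-hand inequality $\cP^s \le \| \hat{\sigma} ({\bf\Sigma}) \|$ holds for every admissible ${\bf\Sigma}$, so $\cP^s$ is a lower bound for the set $\{ \| \hat{\sigma} ({\bf\Sigma}) \| \}$, and the definition of infimum as the greatest lower bound gives $\cP^s \le \inf_{\bf\Sigma} \| \hat{\sigma} ({\bf\Sigma}) \|$. For the lower bound, the left-hand inequality rearranges to $\| \hat{\sigma} ({\bf\Sigma}) \| \le \sqrt{M+N} \; \cP^s$ for every ${\bf\Sigma}$, so $\sqrt{M+N} \; \cP^s$ is an upper bound for the same set; the definition of supremum then yields $\sup_{\bf\Sigma} \| \hat{\sigma} ({\bf\Sigma}) \| \le \sqrt{M+N} \; \cP^s$, which is the claimed inequality after dividing through.

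There is essentially no obstacle. The only subtlety worth flagging is that the argmin sets inside the product $\prod_{t \in C^s}$ may genuinely contain more than one element, since the minimizers of $\min \{ \|\sigma\| : \cE^t(\sigma) = 1 \}$ need not be unique; this is what makes the sup/inf form informative rather than a tautology. The uniformity of Theorem~\ref{thm-boundProximity} over every such selection is exactly what licenses taking sup and inf separately, and the corollary is indeed immediate.
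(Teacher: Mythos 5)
Your argument is correct and matches the paper, which states the corollary as an immediate consequence of Theorem~\ref{thm-boundProximity}: since the two-sided bound holds for every admissible ${\bf\Sigma}$ while $\cP^s$ is fixed, passing to the infimum on the right and the supremum on the left is exactly the intended (and only needed) step.
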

Theorem~\ref{thm-boundProximity} can also be strengthened to show that 
$\cP^s$ achieves its upper bound should $\hat{\sigma} ({\bf\Sigma})$ 
be decided by a single object.
\begin{corollary} \label{cor-uDeaReduction}
If $\hat{\sigma} ({\bf\Sigma}) \in \arg\!\min \left\{ \| \sigma \| : 
	\cE^t(\sigma) = 1 \right\}$ for some $t \in C^s$,
then 
\[
\cP^s = \| \hat{\sigma} ({\bf\Sigma}) \|.
\]
\end{corollary}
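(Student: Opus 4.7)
The plan is to combine the upper bound already established in Theorem~\ref{thm-boundProximity} with a matching lower bound that follows almost directly from the hypothesis. Specifically, Theorem~\ref{thm-boundProximity} already gives $\cP^s \le \|\hat{\sigma}(\bf\Sigma)\|$ for any choice of ${\bf\Sigma}$ in the indicated product of argmin sets, so the only work remaining is to prove the reverse inequality under the extra assumption that $\hat{\sigma}({\bf\Sigma})$ is itself a minimizer of the single-object problem for some $t \in C^s$.

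For the lower bound, I would argue by feasible-set containment. If $\sigma$ is feasible-for-classification for category $s$, then $\cE^{t'}(\sigma) = 1$ for every $t' \in C^s$; in particular, for the distinguished index $t$ named in the hypothesis. Hence every $\sigma$ feasible for the problem defining $\cP^s$ is also feasible for $\min\{\|\sigma\| : \cE^t(\sigma) = 1\}$, whose optimal value equals $\|\hat{\sigma}({\bf\Sigma})\|$ by hypothesis. Minimizing $\|\sigma\|$ over a subset of a feasible region cannot decrease the optimal value, so
\[
\cP^s \;=\; \min\{\|\sigma\| : \sigma \text{ feasible-for-classification}\} \;\ge\; \min\{\|\sigma\| : \cE^t(\sigma) = 1\} \;=\; \|\hat{\sigma}({\bf\Sigma})\|.
\]
Combined with the upper bound from Theorem~\ref{thm-boundProximity}, this yields the equality $\cP^s = \|\hat{\sigma}({\bf\Sigma})\|$.

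There is no real obstacle here: the corollary is essentially a tightness statement saying that when the coordinate-wise maximum $\hat{\sigma}({\bf\Sigma})$ happens to coincide with a single object's minimizer, the gap in Theorem~\ref{thm-boundProximity} collapses. The only conceptual point worth flagging is that the argument uses the hypothesis in the form ``the optimal value of the $t$-th single-object problem is $\|\hat{\sigma}({\bf\Sigma})\|$'' rather than exploiting any particular minimizer's structure, which is exactly why the proof reduces to a one-line feasible-set inclusion.
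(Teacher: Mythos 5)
Your proof is correct and takes essentially the same route as the paper: the paper likewise observes that $\|\hat{\sigma}({\bf\Sigma})\| = \min\{\|\sigma\| : \cE^{\hat{t}}(\sigma)=1\} \le \min\{\|\sigma\| : \cE^{t}(\sigma)=1,\ \forall t \in C^s\} = \cP^s$ (the feasible-set containment you describe) and then cites Theorem~\ref{thm-boundProximity} for the reverse inequality. No gaps to report.
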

\begin{proof}
Let $\hat{t} \in C^s$ have the property that
$\hat{\sigma} ({\bf\Sigma}) \in \arg\!\min \left\{ \| \sigma \| : 
\cE^{\hat{t}}(\sigma) = 1 \right\}$.  Then
\[
\| \hat{\sigma} ({\bf\Sigma}) \| 
	= \min \left\{ \| \sigma \| : \cE^{\hat{t}}(\sigma) = 1 \right\} \\
	\le \min \left\{ \| \sigma \| : \cE^{t}(\sigma) = 1, 
		\; \forall \; t \in C^s \right\} \\
	= \cP^s,
\]
and the result follows because Theorem~\ref{thm-boundProximity} gives
the reverse inequality.
\end{proof}

We address how these mathematical results guide our computational 
effort to compute $\cP^s$ in the next section. However, we first note 
that we assess a classification by summing the proximity values of the 
individual categories, giving an overall proximity evaluation
for the entire classification. So we appraise a classification 
$\cC$ with
\[
\cP(\cC) = \sum_{s=1}^S \; \cP^s,
\]
and its value indicates how much uncertainty is needed so that
all objects have a claim to efficiency within their categories.
Classifications with low total proximity values are preferred
because they partition objects into sets that are intra-category 
efficient relative to low amounts of uncertainty.

\section{Computing the Proximity to Equitable \\ Efficiency} \label{sec-algorithm}

Solving uDEA problems is generally difficult due to a loss of convexity,
see Example 3 in~\cite{ehrgott2018} as an illustration, and calculating 
the proximity to equitable efficiency inherits this difficulty because 
Corollary~\ref{cor-uDeaReduction} equates the value of $\cP^s$ to
solving a uDEA problem under appropriate conditions. A first order algorithm 
in~\cite{ehrgott2018} successfully solves uDEA problems, and we use 
this algorithm to compute a $\bf\Sigma$ by iteratively solving for 
each $t \in C^s$,
\[
\min \left\{ \| \sigma \| : \cE^t(\sigma) = 1 \right\}.
\]
The resulting $\hat{\sigma} ({\bf\Sigma})$ then bounds $\cP^s$
according to Theorem~\ref{thm-boundProximity}, and it even computes 
$\cP^s$ if $\hat{\sigma} ({\bf\Sigma})$ is defined by a single 
object, in which case $\cP^s = \| \hat{\sigma} ({\bf\Sigma}) \|$.

We work to calculate $\cP^s$ by reducing $\hat{\sigma} ({\bf\Sigma})$
if $\cP^s$ is not decided by a single object. Let $\Gamma(\sigma)$ be the 
total sum of robust efficiency scores within the category, 
\[
\Gamma(\sigma) = \sum_{t \in C^s} \cE^t(\sigma).
\]
Then,
\[
\cP^s = \min \left\{ \| \sigma \| : \cE^t(\sigma) = 1, \; 
    \forall \, t \in C^s \right\}
= \min \left\{ \| \sigma \| : \Gamma(\sigma) = |C^s| \right\},
\]
where the last equality follows from the fact that $\Gamma(\sigma)$ 
is equal to the number of objects in $C^s$ if and only if $\sigma$ is 
feasible-for-classification. A unit-length improving direction $d$ from 
$\sigma$ is calculated by solving,
\begin{equation} \label{eq-dirSrch}
\min \left\{ d^T \sigma : 
    d^T \nabla \Gamma(\sigma) \ge 0, \; d^T d \le 1 \right\}.
\end{equation}
The first constraint ensures that $\sigma + \alpha d$ is marginally
feasible-for-classification, and the objective minimizes the directional
derivative of $\| \sigma \|^2$ along $d$ -- note that we have removed the
superfluous scalar of two. 

We search along the solution $d$ if the optimal value of~\eqref{eq-dirSrch} 
is negative, and a straightforward calculation in this case shows that
the step size $\alpha$ must satisfy
\begin{equation} \label{eq-alphaBnd}
0 \le \alpha \le -\sigma^T d -
		\sqrt{\left( \sigma^T d \right)^2 - 
			\left( \| \sigma \|^2 -
			\frac{1}{M+N} \, \| \hat{\sigma} ({\bf\Sigma}) \|^2
			\right) }
\end{equation}
to ensure that $\sigma + \alpha \, d$ does not violate the lower bound in 
Theorem~\ref{thm-boundProximity}. We search this interval with the
method of bisection to find the largest value of $\alpha$ guaranteeing
$\Gamma(\sigma + \alpha \, d) \ge |C^s| - \varepsilon$, where $\varepsilon$ is
a suitably small tolerance required by our computational study.  We then 
update $\sigma$, solve~\eqref{eq-dirSrch}, and repeat our search for 
$\alpha$. The process starts with $\sigma = \hat{\sigma} ({\bf \Sigma})$ 
and terminates once the solution to~\eqref{eq-dirSrch} is nonnegative.

We assumed the existence of $\nabla\Gamma$ in~\eqref{eq-dirSrch}, but this
vector is approximated componentwise with the finite differences
\begin{equation} \label{eq-grad}
\frac{\partial\Gamma}{\partial \sigma_i} 
	\approx \frac{ \Gamma(\sigma) - \Gamma(\sigma - \delta e_i) }{\delta},
\end{equation}
where $e_i$ is the vector of zeros except for a one in the $i$-th position.
We note that this approximation exists even if the gradient doesn't.
The use of a backward difference is important since the more common
forward difference would instead use $\Gamma(\sigma + \delta e_i)$ 
and $\Gamma(\sigma)$. However, $\sigma$ being feasible-for-classification
implies that $\sigma + \delta e_i$ would also be feasible-for-classification, 
and hence, both $\Gamma(\sigma + \delta e_i)$ and $\Gamma(\sigma)$ would 
have the same value and our approximate gradient would be zero. The 
search direction from~\eqref{eq-dirSrch} would then be 
$d = -\sigma / \| \sigma \|$, making 
\[
\sigma + \alpha d = \left( 1 - \frac{\alpha}{\| \sigma \|} \right) \sigma.
\]
The misguided computational interpretation would be that 
we could approximate $\cP^s$ by calculating any 
$\hat{\sigma} ({\bf\Sigma})$ and then scaling this vector
down while satisfying 
\[
\Gamma\left( \left(1 - \frac{\alpha}{\| \hat{\sigma} ({\bf\Sigma}) \|} \right)\, 
    \hat{\sigma}(\bf\Sigma)  \right) \ge |C^s| - \varepsilon.
\]
So the use of a forward difference would limit the search to 
compute $\cP^s$ from an initial $\hat{\sigma} ({\bf\Sigma})$ to a single 
search direction due to the erroneous assumption that $\Gamma(\sigma)$ 
is constant if $\sigma$ is in some small neighborhood of 
$\hat{\sigma} ({\bf\Sigma})$. The use of the backward differences 
removes this concern and provides a more accurate search strategy. 

Our algorithmic approach to compute $\cP^s$ is: \\[-10pt]
\begin{center}
\renewcommand{\arraystretch}{1.6}
\begin{tabularx}{0.92\textwidth}{clX}
Step 0: & Initialize &  Use the algorithm in~\cite{ehrgott2018} to 
	compute a $\bf\Sigma$ and a resulting $\hat{\sigma} ({\bf\Sigma})$.
	If $\hat{\sigma}({\bf\Sigma})$ is defined by a single
	object, then set $\cP^s = \|\hat{\sigma}({\bf\Sigma}) \|$
	and stop. Otherwise, initialize 
	$\sigma$ to $\hat{\sigma} ({\bf\Sigma})$. \\[8pt]
Step 1: & Search Direction & Compute an approximate $\nabla \Gamma(\sigma)$
	with~\eqref{eq-grad} and then use this approximation to
	calculate a search direction $d$ by solving~\eqref{eq-dirSrch}.
	Stop if $d^T \sigma \ge 0$. \\
Step 2: & Line Search &  Use the method of bisection to calculate the 
	largest $\alpha$ satisfying 
	$\Gamma(\sigma + \alpha d) \ge S - \varepsilon$,
	where $\alpha$ satisfies~\eqref{eq-alphaBnd}.
	Update $\sigma$ to $\sigma + \alpha d$ and return to 
	Step 1. 
\end{tabularx} \\[8pt]
\end{center}
We could add a termination criterion to stop if 
$\Gamma(\sigma + \alpha d)$ reached its lower bound in Step 2; however,
theory indicates that the directional derivative from~\eqref{eq-dirSrch}
would be nonnegative in this case, and we use the termination criterion
in Step 1 to validate our computational scheme's agreement with theory.
This approach has been computationally successful.

We comment that each evaluation of $\Gamma(\sigma)$ requires $|C^s|$ robust 
optimization problems to be solved, and while the algorithmic structure 
above is standard, assembling the computational effort is less so.  Robust 
solvers are increasingly trustworthy, but still, they lack some of the 
dependability associated with other problem classes like linear 
programming~\cite{bental2001,holder2020}.
We use Gurobi with many of the numeric options set to their 
most stringent possibilities, and we validate the optimal status of each 
solve.  Success is routine with the increased numeric focus but is more
questionable with default settings. If a problem fails our stringent 
expectations, then we iteratively relax convergent tolerances while 
guaranteeing optimal values, which are efficiency scores, to at least
six decimal places -- so an efficiency score of at least 0.999999 equates
to 1.0. The vast majority of solves more stringently reaches 16 decimal 
places of accuracy.

\section{Classification with Uncertain Data} \label{sec-classification}

Our classification problem partitions the $T$ objects in
$\mathcal{O}$ into the $S$ categories in $\cC$ in a way that 
(approximately) minimizes the total proximity $\cP(\cC)$, 
i.e. we seek to solve
\begin{equation} \label{eq-classificationProb}
\min \{ \cP(\cC) : |\cC| = S \}.
\end{equation}
The search space of this problem is combinatorially problematic 
for even modest problems because its size is the Sterling number of
the second kind that counts the number of ways to disjointly 
partition $T$ objects into $S$ nonempty sets. To illustrate, the search
spaces of the examples in the next section are
\[
\left\{ \begin{array}{c} 30 \\ 3 \end{array} \right\}
    \approx 3.4 \times 10^{13}
\mbox{ and } 
\left\{ \begin{array}{c} 42 \\ 3 \end{array} \right\}
    \approx 1.8 \times 10^{19}.
\]
The immensity of these spaces fundamentally
challenges a classification scheme independent of how
we assess individual classifications, which prompts
the need for heuristics to navigate a search.
We divide our search for a (near) optimal
classification into an initialization procedure, which
efficiently solves several adapted $p$-median problems,
and an iterative improvement strategy, which is a neighborhood 
search that reassigns one object per iteration.

The $p$-median problem partitions objects into sets so that 
the aggregate distance from the objects within the sets to 
their representative medians is as small as possible. We
begin by considering the entire collection of objects
as a single classification, i.e. we do not assume a
partition and originally set $\cC = \{ \mathcal{O}\}$. 
The $p$-median problem depends on a distance between 
the objects, and we let the distance between objects 
$t_i$ and $t_j$ be
\[
d(t_i, \, t_j) = | \, \|\sigma^i\| - \|\sigma^j\| \, |,
\]
where $\sigma^i$ and $\sigma^j$ respectively solve
\[
\min \{ \| \sigma \| : \cE^{t^i}(\sigma) = 1 \}
\; \mbox{ and } \;
\min \{ \| \sigma \| : \cE^{t^j}(\sigma) = 1 \}.
\]
Again, these $\sigma$ vectors are relative to the
entire collection of objects and not to a proposed partition 
of $S$ categories; so there is no intra-category consideration
in the initialization process. The idea is that calculating the 
minimum amount of uncertainty for each object against the whole
set of objects should be a reasonable proxy to seed our search 
to minimize the sum of the intra-category uncertainties 
comprising $\cP(\cC)$.

We adapt the $p$-median problem to include constraints that
mandate the sizes of the sets.  For instance, if we seek
to classify $30$ objects into three categories, then we solve 
an adapted $p$-median problem for sets whose sizes are 
$(2,2,26)$, $(2,3,25)$, $(2,4,24)$, etc.  If the set sizes 
are $(p_1, p_2, \ldots, p_S)$, then we solve
\[
\renewcommand{\arraystretch}{1.3}
\begin{array}{rrcl}
\min & \multicolumn{3}{l}{\sum\limits_{ij} d(t_i, t_j) \, \zeta_{ij}} \\
\mbox{subject to}
& \sum\limits_j \zeta_{ij} & = & 1, \; \forall i \\
& \sum\limits_j \zeta_{jj} & = & S, \\
& \zeta_{jj} & \ge & \zeta_{ij}, \; \forall i,j, \, i \neq j \\
& \sum\limits_k \omega_{jk} & \le & 1, \; \forall j \\
& \sum\limits_i \zeta_{ij} & = & \sum\limits_{k} p'_k \, \omega_{jk}, 
    \; \forall j \\
& \sum\limits_j \omega_{jk} & = & | \, \{ p_q : p_q = p'_k\} \, |,
    \; \forall k \\
& \zeta_{ij} & \in & \{0, 1\}, \; \forall i,j \\
& \omega_{jk} & \in & \{0, 1\}, \; \forall j,k,
\end{array}
\]
where $i$ and $j$ index over $\{1, 2, \ldots, S\}$,
$(p'_1, p'_2, \ldots, p'_{S'})$ lists the unique elements of
$(p_1, p_2, \ldots, p_S)$, and $k$ indexes over
$\{1, 2, \ldots, S'\}$. The first three constraints model
the traditional $p$-median problem, whereas the next three
ensure that the partitioning sets have the specified 
cardinalities. We note that we do not need to consider singletons
because any set with only one or two elements trivially has 
a proximity value of zero.  So we solve the adapted $p$-median 
for all other summative partitions of $S$, and we calculate 
$\cP(\cC)$ for each. Our initial classification is the one with 
the smallest value of $\cP(\cC)$.

Suppose $\cC = \{C^1, C^2, \ldots, C^S\}$ is our initial
incumbent classification. We define a neighbor of $\cC$ to 
be any classification that moves a single object from one category
to another.  So $\cC'$ is a neighbor of $\cC$ if for some 
$s'$ and $s''$, and for some object $o^t \in C^{s'}$,
\[
\cC' = \left( \cC \backslash \{ C^{s'}, C^{s''} \} \right)
    \bigcup \left\{ C^{s'} \backslash \{ o^t \}, \,
        C^{s''} \cup \{ o^t \} \right\}.
\]
From the current incumbent classification $\cC$, we solve
\begin{equation} \label{eq-swapandseek}
\min \{ \cP(\cC') : \cC' \mbox{ is a neighbor of } \cC \}
\end{equation}
by iteratively swapping each object from its current category
to the other categories. We accept a solution 
to~\eqref{eq-swapandseek} as the next incumbent if its value is 
less than $\cP(\cC)$, but we otherwise terminate with a final 
classification. This process repeats with the updated
incumbent if improvement is found.

We conclude with a few comments about our computational process.
Our initialization and subsequent neighborhood search does not 
guarantee a solution to~\eqref{eq-classificationProb}. However, our
adapted $p$-median problem efficiently and efficaciously 
seeds our neighborhood search algorithm so that it converges in a few 
iterations. Moreover, and beyond combinatorial concerns, shrewd 
and judicious navigation through the collection of all possible 
classifications is prudent due to the computational burden of 
calculating $\cP(\cC)$, which requires thousands of SOCP solves 
even on modestly sized problems. Our two phase approach demonstrates 
persuasively on the examples of the next section, and we advocate
it as a reasonable and practically justified approximation
algorithm. Lastly, we have experimented with numerous minor 
variations, e.g. by adjusting the distances in our adapted
$p$-median problem to $d(t_1, t_2) = \| \sigma^1 - \sigma^2 \|_p$ 
or by tweaking the definition of a classification's neighborhood. We 
considered such adjustments in an attempt to expediting our 
computational task without degrading fidelity to the goal of 
identifying an optimal classification, but all adjustments 
either diminished outcomes or lacked computational improvement.

\section{Illustrative Examples} \label{sec-examples}

We apply our classification scheme to two problems to demonstrate
its employ. The first example partitions the 30 stocks in the 
Down Jones Industrial Average into three performance tiers, and
the second categorizes 42 prostrate treatments that were originally 
analyzed with DEA in~\cite{liu2013} and subsequently with 
uDEA in~\cite{ehrgott2018,ehrgott2020}. The objects, i.e. stocks
and prostate treatments, have one input and one output characteristic
in each case. Single input and output examples lend themselves 
to graphical inspection to help verify efficacy, and they guarantee 
that the upper and lower bounds on $\cP(\cC)$ are within $\sqrt{2}$
of each other from Theorem~\ref{thm-boundProximity}. Both examples
set $R'_i = I$, so a feasible-for-classification $\sigma$ exists
from Theorem~\ref{thm-capability}.

A few comments about the computational burden are worthwhile.
Our code is written in python and is available as a supplement 
to this article. We use Pyomo~\cite{hart2017} to model 
all optimization problems and Gurobi to solve all models.
Our initial computational testing suggested that both
examples would take about a week of serial calculation 
on a modern laptop, which seemed extreme. A parallel version
of the code that distributes the calculation over twelve nodes
has significantly reduced the calculation time 
to several hours, with the stock example solving in about
six hours and the prostate example solving in about eight.
The computational burden of classifying with regard to uncertain 
data is nonetheless significant and requires careful consideration.

The input and output characteristics of each stock are
respectively the semi-deviation and the average annual return,
both of which are uncertain estimates of future values.
Semi-deviation is a common measure of risk and is the standard 
deviation of downside returns, whereas the average annual
return is a routine measure of reward.  The risk reward
trade-off is steeped in investing, with risk adverse 
individuals accepting lower average returns to help safeguard 
principal and risk accepting individuals jeopardizing principal 
as they pursue higher returns. The input and output 
characteristics are calculated from the twelve annual 
returns ending at the beginning of each month in 2021.

Classifying stocks relative to efficiency with regard to
risk and reward does not preference risk or reward but instead
considers them as uncertain co-equal trade-offs, and hence, 
the categories tier stocks into performant classes
independent of risk tolerance. A reasonable interpretation 
is that Tier 1 stocks should be considered over Tier 2
or 3 stocks regardless of one's palatability 
toward risk. Tier 2 stocks should likewise be considered over 
Tier 3 stocks.

Figures~\ref{fig-stock1} through~\ref{fig-stock4} show the
progression of our classification process. Tier 1 stocks are
green squares, Tier 2 stocks are blue squares, and Tier 3 stocks
are red diamonds. The outcome from the initialization process 
is in Figure~\ref{fig-stock1}, and we note that Tier 3
can obviously add stocks from Tier 2 while maintaining
a proximity value of zero. The first two  improvements 
identify this fact and reduce the uncertainty of Tier 2 
by moving two of its stocks to Tier 3.  The third  and 
final improvement moves a Tier 1 stock to Tier 2, and 
the algorithm identifies no subsequent improvement.
Table~\ref{table-stockTiers} lists the categories throughout
the classification process along with the proximity values.
Stocks are listed by their ticker symbol.

\begin{figure}[t]
\begin{minipage}{0.45\linewidth}
    \begin{center}
    \includegraphics[width=\linewidth]{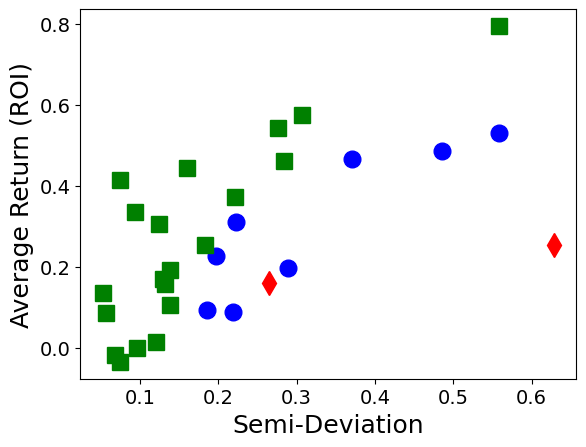}
    \end{center}
    \caption{Initial Classifications}
        \label{fig-stock1}
\end{minipage}
\hspace*{0.07\linewidth}
\begin{minipage}{0.45\linewidth}
    \begin{center}
    \includegraphics[width=\linewidth]{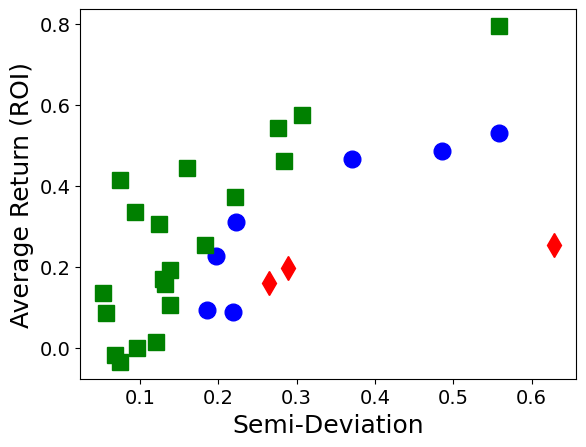}
    \end{center}
    \caption{First Improvement}
        \label{fig-stock2}
\end{minipage} \\[30pt]
\begin{minipage}{0.45\linewidth}
    \begin{center}
    \includegraphics[width=\linewidth]{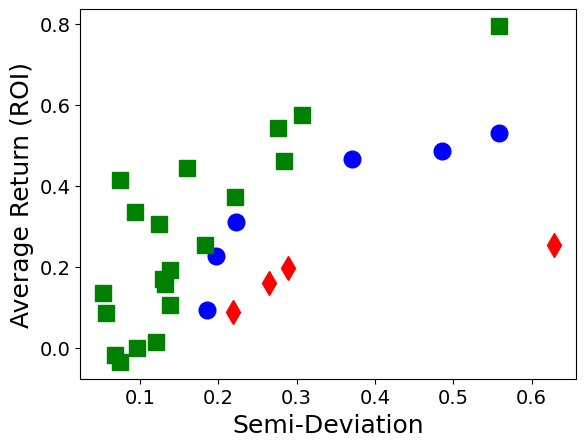}
    \end{center}
    \caption{Second Improvement}
        \label{fig-stock3}
\end{minipage}
\hspace*{0.07\linewidth}
\begin{minipage}{0.45\linewidth}
    \begin{center}
    \includegraphics[width=\linewidth]{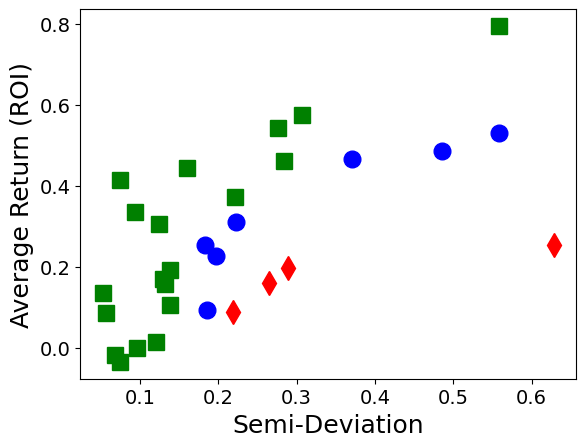}
    \end{center}
    \caption{Final Classification}
        \label{fig-stock4}
\end{minipage}
\end{figure}

\begin{table}[t!]
\begin{center}
{\footnotesize
\renewcommand{\tabcolsep}{4pt}
\renewcommand{\arraystretch}{1.2}
\begin{tabular}{l|l|l|l|c}
              & \multicolumn{1}{c|}{Tier 1} 
              & \multicolumn{1}{c|}{Tier 2}          
              & \multicolumn{1}{c|}{Tier 3}  
              & \multicolumn{1}{c}{$\cP(\cC)$} \\
\hline\hline & & & & \\[-10pt]
Initial       & \multicolumn{1}{c|}{$\cP^1=0.1361$}      
              & \multicolumn{1}{c|}{$\cP^2=0.0889$}   
              & \multicolumn{1}{c|}{$\cP^3 = 0.000$}  
              & $0.2250$ \\[4pt]
              & AXP,AMGN,AAPL,CAT,  & AXP,CVX,CSCO,KO, & BA,WBA  \\
              & DIS,GS,HD,HON,INTC, & DOW,IBM,JPM,TRV  &         \\
              & JNJ,MCD,MRK,MSFT,   &                  &         \\
              & NKE,PG,CRM,UNH,VZ,  &                  &         \\
              & V,WMT               &                  &         \\
\hline & & & & \\[-10pt]
Improvement 1 & \multicolumn{1}{c|}{$\cP^1=0.1361$}      
              & \multicolumn{1}{c|}{$\cP^2=0.0646$}   
              & \multicolumn{1}{c|}{$\cP^3 = 0.000$}  
              & $0.2001$ \\[4pt]
              & AXP,AMGN,AAPL,CAT,  & AXP,CSCO,KO,DOW  & BA,CVX, \\
              & DIS,GS,HD,HON,INTC, & IBM,JPM,TRV      & WBA     \\
              & JNJ,MCD,MRK,MSFT,   &                  &         \\
              & NKE,PG,CRM,UNH,VZ,  &                  &         \\
              & V,WMT               &                  &         \\
\hline & & & & \\[-10pt]
Improvement 2 & \multicolumn{1}{c|}{$\cP^1=0.1361$}      
              & \multicolumn{1}{c|}{$\cP^2=0.0172$}   
              & \multicolumn{1}{c|}{$\cP^3 = 0.000$}  
              & $0.1532$ \\[4pt]
              & AXP,AMGN,AAPL,CAT,  & AXP,CSCO,KO,DOW  & BA,CVX, \\
              & DIS,GS,HD,HON,INTC, & JPM,TRV          & IBM,WBA \\
              & JNJ,MCD,MRK,MSFT,   &                  &         \\
              & NKE,PG,CRM,UNH,VZ,  &                  &         \\
              & V,WMT               &                  &         \\
\hline & & & & \\[-10pt]
Final         & \multicolumn{1}{c|}{$\cP^1=0.1153$}      
              & \multicolumn{1}{c|}{$\cP^2=0.0209$}   
              & \multicolumn{1}{c|}{$\cP^3 = 0.000$}  
              & $0.1361$ \\[4pt]
              & AXP,AMGN,AAPL,CAT,  & AXP,CSCO,KO,DOW  & BA,CVX, \\
              & DIS,GS,HD,HON,INTC, & JPM,TRV,CRM      & IBM,WBA \\
              & JNJ,MCD,MRK,MSFT,V  &                  &         \\
              & NKE,PG,UNH,VZ,WMT   &                  &         \\
\end{tabular} }
\end{center}
\caption{Stock tiers through the classification process} \label{table-stockTiers}
\end{table}

The second example is a collection of 42 prostate cases that
were previously evaluated with DEA and uDEA, 
see~\cite{ehrgott2018,lin2013quality,ehrgott2020}. These
treatments were delivered to patients in New Zealand and
were approved for observational studies. Treatment planning
has an established history in operations research, and we refer 
readers to~\cite{EhGueHaShao08,EhrgottHolder2017,holder4,romeijn08} 
as summary reviews. The overriding
goal of treatment planning is to design a treatment that
delivers a uniform, tumoricidal dose to the targeted region
while sparing nearby organs from harmful effects. The targeted
region is the planning treatment volume (PTV), which is a 
clinically delineated portion of the anatomy that encompasses
both cancerous tumors and their perceived microscopic extensions. 
The organs at risk in a prostate case are generally the rectum,
the bladder, and the femoral heads, with the rectum and bladder 
being particularly concerning because they abut the prostate.

Each treatment is patient specific, and although trained experts 
tailor treatments with sophisticated optimization software to 
clinical standards, treatment quality remains nebulous and varied 
for many reasons. There are preference and software differences 
among clinics and clinicians, there are mathematical and computational 
assumptions that only approximate what is actually delivered, and there 
are uncertainties in treatment delivery. All 42 cases were deemed 
clinically acceptable, but that does not mean that they were bereft 
of improvement. Indeed, one of the outcomes of~\cite{lin2013quality}
was that treatments identified for possible improvement actually
benefited from reconsideration. So the practical importance of
reviewing previously delivered treatments is to learn from those 
that were outstanding so that clinical acceptability improves.

The original work in~\cite{lin2013quality} posited the use of
DEA to analyze treatments, but this effort assumed certain characteristic
data. The subsequent efforts in~\cite{ehrgott2018}
and~\cite{ehrgott2020} recognized the uncertainty of the
characteristics and provided alternate assessments that 
benefited from this recognition. The computational processes
in~\cite{ehrgott2018,ehrgott2020} evaluated individual treatments 
on the amount of uncertainty required to be efficient within the 
entire collection of treatments, and those that necessitated 
larger amounts of uncertainty were recommended for review and
possible improvement.

Our classification scheme advances the work
in~\cite{ehrgott2018,lin2013quality,ehrgott2020}.  First, it
recognizes and models uncertainty like~\cite{ehrgott2018,ehrgott2020},
so it gains fidelity over~\cite{lin2013quality} with regard 
to the uncertain clinical situation. Second, the analysis methods
in~\cite{ehrgott2018,ehrgott2020} are not directly classification
processes, but rather, they ask clinicians to discern which 
treatments might benefit from additional consideration post 
calculation. For instance, the authors of~\cite{ehrgott2018}
suggest segmenting the cases with a threshold on the amount of 
uncertainty, but this threshold can only be decided after the 
computational effort. Our classification process instead identifies 
(near) optimal categories and partitions the cases into a fixed 
number of categories without an after-the-fact cutoff. We partition 
the 42 cases into three categories: those deemed to be clinical exemplars, 
those deemed to be clinical standards, and those that should be 
considered for additional improvement. Classifying into three
categories is clinically reasonable because it identifies clinical
exemplars that illustrate best practices, which is unlike 
the previous dichotomies that primarily distinguish the treatments
that should be reconsidered. Our trichotomy does both, i.e. it 
identifies clinical exemplars and it distinguishes those that should 
be reconsidered.

The input characteristic is the rectal generalized equivalent 
uniform dose (gEUD), which measures the average homogeneous 
dose delivered to the rectum, and the output characteristic is the
dose received by 95\% of the PTV ($D_{95}$). The unit of dose is 
a Gray (Gy). These are the same input and output characteristics 
in~\cite{ehrgott2018,lin2013quality,ehrgott2020}, and all three of
these articles review additional clinical details. The
categories throughout the classification process are in
Figures~\ref{fig-prostate1} and~\ref{fig-prostate2}. The category
of green squares contains exemplar treatments, the category of
blue circles contains clinical standards, and the category of red
diamonds contains treatments that should be reconsidered.
Note that the search following the initialization process only
identifies a single improvement, with one of the treatments moving
into the category of clinical standards from those that would
have been recommended for continued review.

\begin{figure}[t]
\begin{minipage}{0.45\linewidth}
    \begin{center}
    \includegraphics[width=\linewidth]{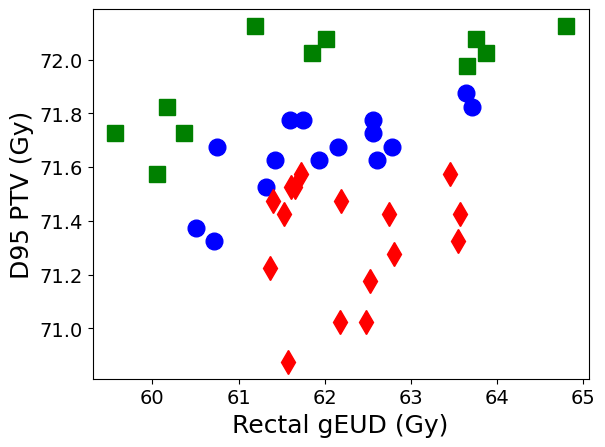}
    \end{center}
    \caption{Initial Classifications}
        \label{fig-prostate1}
\end{minipage}
\hspace*{0.07\linewidth}
\begin{minipage}{0.45\linewidth}
    \begin{center}
    \includegraphics[width=\linewidth]{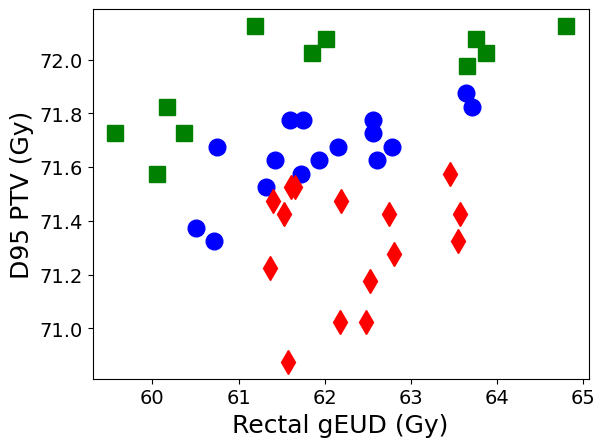}
    \end{center}
    \caption{First Improvement}
        \label{fig-prostate2}
\end{minipage}
\end{figure}

The initial and improved proximity values are: 
\begin{center}
\renewcommand{\tabcolsep}{6pt}
\renewcommand{\arraystretch}{1.2}
\begin{tabular}{l|cccc}
        & $\cP^1$  & $\cP^2$  & $\cP^3$  & $\cC(\cP)$  \\
\hline
Initial & $0.1851$ & $0.1693$ & $0.8049$ & $1.1593$ \\
Final   & $0.1851$ & $0.1693$ & $0.7997$ & $1.1541$.
\end{tabular}
\end{center}
Our partitioning suggests 15 cases would have likely benefited 
from additional planning, which is  fewer than  the 20 
cases shown to exceed the proposed threshold in~\cite{ehrgott2018}.
Some of this decrease is due, at least in part, to the fact 
that we partition into three categories instead of two. 
Three final observations are worthwhile. First, the interplay 
between the second and third categories suggests that 
it would be difficult to discern a clear separation between these
categories without a tool that optimizes the classification. Second,
just because a treatment is classified as warranting a review does not
immediately suggest that the treatment fell below clinical guidelines.
Remember that trained experts tailor each treatment to a patient's specific
needs, and sometimes this imposes considerations other than standard performance
metrics.  So being classified as warranting a review would simply alert an
expert to the fact that continued design might be worthwhile. Third,
the categories visually segment themselves more vertically than horizontally, 
which suggests that the $D_{95}$ value is somewhat more important as a
classification parameter.  This observation is less clear between the
second and third categories, but it is the type of visual acuity provided by
the classification.

\section{Conclusion} \label{sec-conclusion}

Uncertain data envelopment analysis can be useful within a classification
process, and we have advanced the employ of the proximity to equitable
efficiency as a sound tool to classify in the presence of uncertain data.
We overcome two computational issues, those being the innate difficulty
of calculating our proximity value due to a loss of convexity and the
combinatorially fraught problem of searching over all possible partitions.
The downside to our classification process is that it necessitates
a protracted calculation in serial, a fact that we sidestep on our 
examples by parallelizing our algorithm. Future work could seek
to streamline the computational effort to better address larger problems 
with more characteristics. Applying the classification process to
additional problems should also be fruitful.

\section*{Acknowledgements}
The authors are grateful for thoughtful comments made by Matthias Ehrgott.


\end{document}